\newtheorem{theorem}{Theorem}[section]
\newtheorem{proposition}[theorem]{Proposition}
\newtheorem{lemma}[theorem]{Lemma}
\theoremstyle{definition}
\newtheorem{definition}{Definition}[section]
\newtheorem{example}{Example}[section]
\theoremstyle{remark}
\newtheorem{remark}{Remark}[section]
\def\lg{\mathfrak{g}}
\title [Multiplier $\epsilon$-Bialgebras]{Multiplier Infinitesimal
Bialgebras and derivator Lie Bialgebras}
\author[Ochoa]{Jes\'us Alonso Ochoa Arango*}
\email{jesus.ochoa@javeriana.edu.co}
\author[Tiraboschi]{Alejandro Tiraboschi**}
\email{tirabo@famaf.unc.edu.ar}
\author[Wang]{Shuanhong Wang.***}
\email{shuanhwang@seu.edu.cn}
\address{\noindent
*Departamento de Matem\'aticas, Facultad de ciencias,
Pontificia Universidad Javeriana. Bogot\'a, Colombia.
}
\address{\noindent
** Facultad de Matem\'atica, Astronom\'\i a y F\'\i sica, Universidad
Nacional de C\'ordoba.  CIEM -- CONICET. (5000) Ciudad
Universitaria, C\'ordoba, Argentina.}
\address{\noindent ***Department of Mathematics,
Southeast University, Nanjing 210096 (China).
}
\thanks{This work was partially supported by CONICET, Fundaci\' on
Antorchas, Agencia C\'ordoba Ciencia, ANPCyT, Secyt (UNC) and Pontificia Universidad Javeriana}
\subjclass{16T05; 16T25; 16T30; 17B45; 17B60; 17B62; 18B35;18B10; 20J99; 20L05}
\date{\today}
\begin{document}

\renewcommand{\baselinestretch}{1.2}
\thispagestyle{empty}

\begin{abstract}
We propose a definition of \emph{multiplier infinitesimal bialgebra}
and a definition of \emph{derivator Lie bialgebra}.
We give some examples of these structures and
prove that every \emph{bibalanced} multiplier infinitesimal
bialgebra gives rise to a  multiplier Lie bialgebra.
\end{abstract}

\maketitle

\section*{Introduction}
An infinitesimal bialgebra is a triple $(A, m, \Delta)$
where $(A, m)$ is an associative algebra, $(A, \Delta )$
is a coassociative coalgebra and for each $a, b \in A$,
$$\Delta(ab) = \sum a b_1 \otimes b_2 + a_1 \otimes a_2 b.$$
Infintesimal bialgebras were introduced by Joni and Rota \cite{JR}
in order to provide an algebraic framework for the calculus of
divided differences. In \cite{A2} the author introduces
several new examples, in particular he shows that
the path algebra of an arbitrary quiver admits a
canonical structure of infinitesimal bialgebra.

The notion of \emph{multiplier Hopf algebra}
was introduced by Van Daele \cite{VD1}. This concept
is a far generalization of the usual concept of Hopf algebra
to the non-unital case. In that work the author
consider an associative algebra $A$, with or without identity,
and a morphism $\Delta$ from $A$ to the multiplier
algebra $M(A \otimes A)$ of $A \otimes A$,
and imposes certain conditions on $\Delta$
(such as \emph{coassociativity}). The motivating example
was the case where $A$ is the algebra of finitely supported,
complex valued functions on an infinite discrete group,
and where $\Delta(f, g)(s, t) = f(st)$.
In these and subsequent works Van Daele together with his
collaborators have developed many of the results about
finite dimensional Hopf algebras to this context.

In this work we introduce the notion of
\emph{multiplier infinitesimal bialgebra}
(see definition \ref{e-multiplier bialgebra}) and
give several examples of the concept. This is a
mix of the two mentioned structures in which
the coproduct take values in the multiplier
algebra $M(A \otimes A)$ and at the same time
it is a derivation, in which $M(A \otimes A)$
is consider as an $A$-bimodule in the obvious way.

We also introduce the notion of \emph{derivator Lie bialgebra}
and give a couple of examples of this structure.
This is an approach to a nonassociative version
of the multiplier Hopf algebras of Van Daele.
In the last section we obtain a
result that relates the two above concepts in
a similar way to the one obtained by Aguiar in \cite{A1}.

\section*{Acknowledgements}
The first author would like to thank to Professor
Marcelo Aguiar for answer him to many questions
about infinitesimal bialgebras and
related structures. This work begin as a part of the PhD. Thesis
of the first author at Facultad de Matematica, Astronom\'ia y F\'isica
of Universidad Nacional de C\'ordoba in Argentina and
was continued while he was at Pontificia Universidad Javeriana in Colombia.
He would like thank to them for their warm and hospitality.
\section{Infinitesimal multiplier bialgebras}

From  now on we only consider $\mathbb{C}$-algebras
with non-degenerate product, that is,
$\cdot: A \times A \to A$, the product of $A$,
is no degenerate as an $A$-valued $\mathbb{C}$-bilinear form.

\noindent If $A$ is an algebra, we will denote by $L(A, \cdot)$
(resp. $R(A, \cdot)$) the space of {\em left multipliers }
(resp. {\em right multipliers }) of $A$:
\begin{align*}
L(A, \cdot) &= \{\lambda \in Hom_{\mathbb C}(A,A):
\lambda(ab) = \lambda(a)b,\;\; \forall a, b \in A \}, \\
R(A, \cdot) &= \{\rho\in Hom_{\mathbb C}(A,A):\rho(ab)
= a\rho(b),\;\; \forall a,b \in A \}.
\end{align*}
we also denote by $M(A, \cdot)$ the algebra of multipliers of $A$, i.e.
$$
M(A, \cdot) = \{(\lambda,\rho)\in Hom_{\mathbb C}(A,A)^2:
\lambda \in L(A), \rho\in R(A) \text{ and } b\lambda(c) =
\rho(b)c, \;\forall\,b,c \in A \}.
$$

\begin{remark}
Let $a \in A$, define $\lambda_a, \; \rho_a \in Hom_{\mathbb C}(A,A)$
by $\lambda_a(b) = ab$ and $\rho_a(b) = ba$.
Then $\lambda_a$ and $\rho_a$ are left and right multipliers,
respectively. Moreover, since the product of $A$ is
non-degenerated, the canonical map $a \mapsto (\lambda_a,\rho_a)$
is a one to one algebra map from $A$ to $M(A, \cdot)$.

\noindent We remind that the product in $M(A)$ is given by:
$$
(\lambda,\rho)(\lambda',\rho') = (\lambda \circ \lambda', \rho' \circ \rho).
$$
With this product, $M(A)$ is an algebra with identity.
\end{remark}

\begin{definition}\label{def-coproduct}
A linear map $\Delta: A \to M(A\otimes A) $ is called a
{\em coproduct} if
\begin{enumerate}
\item[(a)] $T_3(a\otimes b) = \Delta(b)(a \otimes 1)$ and $T_4(a\otimes b) =
(1 \otimes b) \Delta(a)$ are elements of $A \otimes A$
for all $a,b \in A$,
\item[(b)] the map $\Delta$ is coassociative in the sense that:
\begin{equation} \label{coassociativity}
(\iota \otimes T_4) \circ (T_3 \otimes \iota)
=  (T_3 \otimes \iota)\circ(\iota \otimes T_4).
\end{equation}
\end{enumerate}
\end{definition}

\begin{remark} Using the Sweedler notation, introduced
by Van Daele, the identity (\ref{coassociativity}) could be read as:
\begin{equation}\label{coassociativity2}
b_1a\otimes b_{21} \otimes c b_{22} = b_{11}a\otimes b_{12} \otimes cb_2,
\end{equation}
since
\begin{align*}
(\iota \otimes T_4) \circ (T_3 \otimes \iota) (a\otimes b \otimes c)
&=  (\iota \otimes T_4) \circ  (\Delta(b)(a\otimes 1) \otimes c) \\
&= (\iota \otimes T_4) \circ (b_1a\otimes b_2 \otimes c) \\
&= b_1a\otimes (1\otimes c)\Delta(b_2) \\
&= b_1a\otimes b_{21} \otimes c b_{22}.\\
(T_3 \otimes \iota)\circ(\iota \otimes T_4) (a\otimes b \otimes c)
&=   (T_3 \otimes \iota)\circ (a \otimes (1 \otimes c)\Delta(b)) \\
&= (T_3 \otimes \iota)\circ  (a\otimes b_1 \otimes cb_3) \\
&= \Delta(b_1)(a \otimes 1) \otimes cb_2 \\
&= b_{11}a\otimes b_{12} \otimes cb_2.
\end{align*}
The common value in (\ref{coassociativity2}) will be denoted by
$b_1a \otimes b_2 \otimes cb_3$.
\end{remark}

\begin{definition}\label{e-multiplier bialgebra}
An \emph{infinitesimal multiplier bialgebra}
(or multiplier $\epsilon$-bialgebra)
is a triple $(A,m,\Delta)$ where
\begin{enumerate}
\item $(A,m)$ is an associative algebra with non-degenerate product,
\item $\Delta: A \to M(A\otimes A) $ is a coproduct on $A$, and
\item $\Delta(ab) = \Delta(a) \circ (1 \otimes b) + (a \otimes 1) \circ \Delta(b).$
\end{enumerate}
\end{definition}

\subsection{Examples of Multipliers $\epsilon$-Bialgebras}

\begin{example}\label{quivers}{\bf Quivers. }
Let $(Q, s, t)$ be an infinite quiver.
Where $s$ and $t$ stands for the source and target maps respectively.
Let $A(Q)$ and $V(Q)$ be the set of arrows and
vertices of $Q$ respectively. Let
$$\Gamma = \{\gamma:  I \to A(Q) : \text{ where }
I \subseteq \mathbb{Z}, \text{ is an interval and }
t(\gamma(i)= s(\gamma(i+1)) \},
$$
be the set of paths in $Q$ (note that here we are considering
finite and infinite paths).

\noindent For all $\gamma \in \Gamma$ we will 
denote $I_\gamma = \gamma^{-1}(A(Q))$.
If $I_\gamma = [i,.)$ then we write $s(\gamma) = \gamma_{i} = \gamma(i)$,
the {\em source} of $\gamma$; and if
$I_\gamma = (-\infty,\cdot)$ we simply write $s(\gamma) = -\infty$.
In an analogous way, if $\gamma \in \Gamma$ and  $I_\gamma = (\cdot,i]$,
we denote $t(\gamma) = \gamma_{i} = \gamma(i)$, the {\em target} of $\gamma$.
and if $I_\gamma = (\cdot,+\infty)$ we write $t(\gamma) = +\infty$.
The length of a path $\gamma$ is defined as $|\gamma| : = |I_\gamma|$,
the cardinal of the set $I_\gamma$.

For any vertex $e \in V(Q)$ we will define two sets
\begin{center}
 $\Gamma_e = \{ i \in Dom \gamma: \;s(\gamma_i)= e \}$
and $\Gamma^e= \{ i \in Dom \gamma: \; t(\gamma_i) = e\}$.
\end{center}

\noindent Let $\Gamma' =\{\gamma \in \Gamma:\;
\forall e \in V(Q),\;|\Gamma_e|< \infty \;\text{and}\;
|\Gamma^e|< \infty\} \cup \{ \pm \infty \}$
for all $e \in V(Q)\} \cup\{-\infty,+\infty\}$.
Thus  $\Gamma'$ is the set of  paths $\gamma \in \Gamma$,
such that for every $e \in V(Q)$, $\gamma$ pass trough $e$
only finitely times and we adjoint to this set
two elements $\{-\infty,+\infty\}$ that doesn't belong to $V(Q)$
and we define $s(\pm\infty) = t(\pm\infty) = \pm\infty$.

\begin{definition}
The {\em generalized path algebra}  of $Q$ is the
vector space $k_\infty Q$ with basis $\Gamma'$
and where the multiplication is the concatenation
of paths whenever is possible and zero otherwise.
The added symbols $\pm \infty$ are idempotents by definition.
\end{definition}
\begin{remark}
We can observe here that when we define the set $\Gamma'$
we adjoint two elements $\pm \infty$, the reason for this
is that we want to have a no degenerated product in the generalized
path algebra $k_\infty Q$.
\end{remark}

\vskip .5cm

We are going to define a coproduct on $k_{\infty}Q$
in the following way.
If $\gamma = \cdots \gamma_{i}\gamma_{i+1}\cdots$, then
$$
\Delta(\gamma) = \sum_{i \in \mathbb Z} \cdots \gamma_{i-2}\gamma_{i-1}
\otimes \gamma_{i+1}\gamma_{i+2}\cdots;
$$
that is, for  $\gamma \in \Gamma$ we define
$\Delta(\gamma) = (\Delta_{\lambda}(\gamma), \Delta_{\rho}(\gamma))
\in M(A \otimes A)$ by the formulas:
$$
\Delta_{\lambda}(\gamma)(\gamma' \otimes \gamma'') =
\sum_{i \in \mathbb Z} \cdots \gamma_{i-2}\gamma_{i-1} \gamma'
\otimes \gamma_{i+1}\gamma_{i+2} \cdots \gamma''.   (*)
$$
$$
\Delta_{\rho}(\gamma)(\gamma' \otimes \gamma'') =
\sum_{i \in \mathbb Z} \gamma' \cdots \gamma_{i-2}\gamma_{i-1}
\otimes \gamma''\gamma_{i+1}\gamma_{i+2} \cdots.   (**)
$$
Since $\Gamma^e$ and $\Gamma_e$ 
are finite (by definition of $\Gamma'$) then both sums are well defined.
We define $\Delta(\pm \infty) = 0$ and extend $\Delta$
linearly to the whole $A$.
It is clear that $\Delta_{\lambda}$ and $\Delta_{\rho}$
are left and right multipliers,
respectively, and that
$(\gamma_1 \otimes \gamma_2) \Delta_{\lambda}(\gamma)(\gamma_3 \otimes \gamma_4)
=  \Delta_{\rho}(\gamma)(\gamma_1 \otimes \gamma_2)(\gamma_3 \otimes \gamma_4)$;
thus  $\Delta(\gamma)  \in M(A \otimes A)$.

That $\Delta$ is a derivation, i.e.,
\begin{equation}\label{delta derivation quiver}
\Delta(\gamma \cdot \gamma') = (\gamma \otimes 1)\Delta(\gamma')
+ \Delta(\gamma)(1 \otimes \gamma'),
\end{equation}
follows by direct verification  from the possible form
of the paths. For instance, if $\gamma = \cdots \gamma_{n-1}\gamma_n$
then, we have two cases.
\begin{enumerate}
 \item If $e_n$ the target of $\gamma_n$ is
different of $e'_0$, the source of $\gamma'_0$,
both sides of \eqref{delta derivation quiver} are zero.
\item If $e_n = e'_0$, then
$$
\Delta(\gamma \cdot \gamma') =  \sum_{i \le n}
(\cdots \gamma_{i-1} \otimes \gamma_{i+1} \cdots
\gamma_n \gamma'_1 \gamma'_2 \cdots)
+  \sum_{1\le i} (\cdots \gamma_n \gamma'_1 \gamma'_2
\cdots \gamma'_{i-2}\gamma'_{i-1}
\otimes \gamma'_{i+1}\gamma'_{i+2}\cdots).
$$
On the other hand
\begin{align*}
(\gamma \otimes 1)\Delta(\gamma')
&=  (\gamma \otimes 1)\sum_{1 \le i} (\gamma'_0
\cdots \gamma'_{i-1} \otimes \gamma'_{i+1}\gamma'_{i+2} \cdots)  \\
&= \sum_{1 \le i}( \cdots \gamma_{n-1}\gamma_n \gamma'_1
\cdots \gamma'_{i-1} \otimes \gamma'_{i+1}\gamma'_{i+2} \cdots),
\end{align*}
and
\begin{align*}
\Delta(\gamma) (1 \otimes \gamma')
&= \sum_{ i \le n}( \cdots \gamma_{i-1} \otimes
\gamma_{i+1} \cdots \gamma_n)  (1 \otimes \gamma') \\
&=\sum_{ i \le n}( \cdots \gamma_{i-1}
\otimes \gamma_{i+1} \cdots \gamma_n \gamma'_1 \cdots),
\end{align*}
so the desired conclusion follows.
\end{enumerate}

It is not hard to check  that $\Delta(\gamma)(\gamma' \otimes 1)
\in A \otimes A$ and
$(1 \otimes \gamma )\Delta(\gamma') \in A \otimes A$.
\end{example}

\begin{example}\label{Posets}{\bf Posets. }

Let $L$ be a poset and let  $\mathcal P$ be the set of
subposets of $L$ with maximum and minimum.
If $P \in \mathcal P$ we denote by $1_P$ and $0_P$
the maximum and the minimum of $P$ respectively.

Denote by $A_{\mathcal P}$ the $k$-vector space
with base $\mathcal P$ equipped with the following product:
$$
P*Q = \left\{ \begin{matrix} P \cup Q &\quad \text{ if }
1_P = 0_Q \quad \text{with the inherited order,}\\ 0 & \quad \text{ otherwise.}\end{matrix} \right.
$$
for every $P, Q \in \mathcal P$ and extend
this product linearly to the whole $A_{\mathcal P}$.

\begin{remark}
The algebra $A_{\mathcal P}$ is nonunital but have local units.
In fact, if $P_1,P_2,\ldots,P_n \in \mathcal P$
then write $S = \{0_{P_1}, \cdots, 0_{P_n}\}$
and $T = \{1_{P_1}, \cdots, 1_{P_n}\}$. It is clear
that $\sum_{u \in S} \{u\}$ and $\sum_{v \in T} \{v\}$
are left and right local units, respectively,
for any linear combination of
$\{ P_1,P_2,\ldots,P_n \}$.
Thus we have that the product of $A_{\mathcal P}$
is no degenerated and the canonical map from
$A_{\mathcal P}$(or $A_{\mathcal P} \otimes A_{\mathcal P}$)
to its multipliers is injective. We also
have that $(A_{\mathcal P})^2 = A_{\mathcal P}$.
\end{remark}

\noindent \emph{Notation:}
if $P \in \mathcal P$ we will denote $P_0 = P-\{1_P\}$.
For every $P \in  \mathcal P$ and $x \in P_0$
we will to write $(-\infty, x]_{P} := \{y \in P:  y \le x\}$
and $[x,+\infty)_{P} := \{y \in P:   x \le y\}$.
If $x \not\in P_0$  then $(- \infty, x]_{P} :=0$
and $[x, + \infty)_{P} := 0$.

\vskip 0.5cm

Let $\Delta:  A_{\mathcal P} \to M( A_{\mathcal P}\otimes A_{\mathcal P})$
be the map given by
$$\Delta_{\lambda}(P)(Q\otimes R)
= \sum_{x \in P_0} (-\infty, x]*Q \otimes [x, +\infty)*R
$$
and
$$\Delta_{\rho}(P)(Q\otimes R)
= \sum_{x \in P_0} Q*(-\infty, x] \otimes R*[x, +\infty)
$$

It is clear that $\Delta_{\lambda}(P)$ and
$\Delta_{\rho}(P)$ are left and right multipliers
of $ A_{\mathcal P}\otimes A_{\mathcal P}$ respectively,
and it easy to check that $(P \otimes Q) \Delta_{\lambda}(T)(R \otimes S)
= \Delta_{\rho}(T)(P \otimes Q)(R \otimes S)$.

\begin{lemma} Let $P,S \in \mathcal P$. Then,
$$
\Delta(P)(S \otimes 1) \in  A_{\mathcal P}\otimes A_{\mathcal P}
\quad \text{ and } \quad (1 \otimes S)\Delta(P) \in
A_{\mathcal P}\otimes A_{\mathcal P}.
$$
\end{lemma}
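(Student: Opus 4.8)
The plan is to verify the two memberships by writing out each product in $M(A_{\mathcal P}\otimes A_{\mathcal P})$ explicitly and observing that, once $S$ is inserted, the a priori infinite defining sum for $\Delta$ collapses to a single term. Recall that $\Delta(P)(S\otimes 1)$ is the product in the multiplier algebra of $\Delta(P)$ with the multiplier $S\otimes 1$ (here $1$ is the identity of $M(A_{\mathcal P})$); concretely, it is the element of $M(A_{\mathcal P}\otimes A_{\mathcal P})$ whose left action on a basic tensor $Q\otimes R$ equals $\Delta_\lambda(P)\big((S*Q)\otimes R\big)$, and since the product of $A_{\mathcal P}$ is non-degenerate such an element, if it comes from $A_{\mathcal P}\otimes A_{\mathcal P}$, is uniquely determined by this action.

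First I would substitute $S*Q$ into the defining formula for $\Delta_\lambda$, obtaining
$$\Delta_\lambda(P)\big((S*Q)\otimes R\big)=\sum_{x\in P_0}\big((-\infty,x]*S\big)*Q\otimes [x,+\infty)*R,$$
which exhibits $\Delta(P)(S\otimes 1)$ as $\sum_{x\in P_0}\big((-\infty,x]*S\big)\otimes [x,+\infty)$, provided this sum is finite. The key observation is that $(-\infty,x]*S$ is nonzero only when the maximum $x$ of $(-\infty,x]_P$ equals the minimum $0_S$ of $S$. Hence the only index that survives is $x=0_S$, and the sum reduces to the single term $\big((-\infty,0_S]*S\big)\otimes [0_S,+\infty)$ when $0_S\in P_0$, and to $0$ otherwise. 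In either case the result is a finite combination of basic tensors, so it lies in $A_{\mathcal P}\otimes A_{\mathcal P}$.

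For the second membership I would argue symmetrically: $(1\otimes S)\Delta(P)$ is the element whose right action on $Q\otimes R$ is $\Delta_\rho(P)\big(Q\otimes (R*S)\big)$, which by the formula for $\Delta_\rho$ identifies it with $\sum_{x\in P_0}(-\infty,x]\otimes\big(S*[x,+\infty)\big)$. Now $S*[x,+\infty)$ vanishes unless the maximum $1_S$ of $S$ equals the minimum $x$ of $[x,+\infty)_P$, so again only $x=1_S$ can contribute and the sum collapses to at most one term.

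I do not expect a genuine obstacle here; the only care needed is bookkeeping the multiplier structure correctly, namely recognizing which leg $S$ acts on and from which side, and checking that the left- and right-multiplier descriptions of each product are mutually consistent. Once the defining sums are written out, the essential content is simply that the poset product $P*Q$ is nonzero only when $1_P=0_Q$, which pins the summation index to a single value and makes finiteness automatic. (This lemma is precisely condition (a) of Definition \ref{def-coproduct} for the present example.)
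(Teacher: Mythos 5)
Your proof is correct and takes essentially the same route as the paper: the paper likewise substitutes $S$ into the defining sums for $\Delta_\lambda$ and $\Delta_\rho$, uses that $(-\infty,x]*S\neq 0$ forces $x=0_S$, and identifies $\Delta(P)(S\otimes 1)$ with $U=(-\infty,0_S]_P*S\otimes[0_S,+\infty)_P$ (with the second membership handled symmetrically). The only minor difference is that the paper explicitly checks both multiplier components against $\lambda_U$ and $\rho_U$, whereas you compute one component and let the multiplier compatibility condition together with non-degeneracy force the other, which is a legitimate shortcut.
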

\begin{proof}
We are going to show that if $P, S \in \mathcal P$ then
$$
(\Delta_{\lambda}(P) \circ (S \otimes 1)\,,\,(S \otimes 1)
\circ \Delta_{\rho}(P)) = (\lambda_U,\rho_U),
$$
where $U = (-\infty, 0_S]_P * S \otimes [0_S , +\infty )_P$.
In fact, if $0_{S} \in P_0$ then
\begin{align*}
\Delta_{\lambda}(P) \circ (S \otimes 1)(X \otimes Y)
&= \sum_{x \in P_0} (-\infty, x]*S*X \otimes [x, +\infty)*Y\\
&= (-\infty, 0_{S}]*S*X \otimes [0_{S}, +\infty)*Y,\quad (*)\\
(S \otimes 1) \circ \Delta_{\rho}(P)(X \otimes Y)
&= \sum_{x \in P_0} X*(-\infty, x]*S \otimes Y*[x, +\infty)\\
&= X*(-\infty, 0_S]*S \otimes Y*[0_S, +\infty), \quad(**)
\end{align*}
and
\begin{align*}
\lambda_U(X \otimes Y) &= (- \infty, 0_{S}]*S*X
\otimes [0_{S}, +\infty)*Y, \\
\rho_U(X \otimes Y) &= X*(-\infty, 0_S] * S \otimes Y*[0_S, +\infty).
\end{align*}
The left and right multiplier of $U$ are identical to $(*)$
and $(**)$ respectively.
\noindent In the same way we show that $(1 \otimes S)\Delta(P)
\in  A_{\mathcal P}\otimes A_{\mathcal P}$.
\end{proof}

Now we will to probe that $\Delta$ is a derivation.
If $P*Q = 0$, then $1_P \not= 0_Q$ and then follows
that $0=\Delta(P)(1\otimes Q) = (P \otimes 1)\Delta(Q)$.
If $P*Q \not= 0$ then $1_P = 0_Q$ and
$(P *Q)_0 = P_0 \cup Q_0$, thus
\begin{align*}
\Delta(P*Q) &=  \sum_{x \in (P*Q)_0} (-\infty, x]_{P*Q}
\otimes [x, +\infty)_{P*Q}  \\
&=  \sum_{x \in P_0} (-\infty, x]_{P*Q} \otimes
[x, +\infty)_{P*Q} +\sum_{x \in Q_0} (-\infty, x]_{P*Q}
\otimes [x, \infty)_{P*Q} \\
&= \sum_{x \in P_0} (-\infty, x]_{P}
\otimes [ x, +\infty)_{P*Q} +\sum_{x \in Q_0} (-\infty, x]_{P*Q}
\otimes [x, +\infty)_{Q}  \\
&= \sum_{x \in P_0} (-\infty, x]_{P} \otimes
[x, +\infty)_{P}*Q +\sum_{x \in Q_0} P*(-\infty, x]_{Q}
\otimes [x, +\infty)_{Q}  \\
& =  \Delta(P)(1\otimes Q) + (P \otimes 1)\Delta(Q).
\end{align*}
\end{example}

\section{Derivator Lie bialgebras}

\begin{definition}\label{Derivator Lie Bialgebras}
A \emph{derivator Lie bialgebra} is a collection
$(\mathfrak{g}, [\cdot,\cdot], \delta, \zeta, T_1, T_2)$ where:
\begin{itemize}
 \item $(\mathfrak{g}, [\cdot,\cdot])$ is a Lie algebra,
 \item $\delta, \zeta: \mathfrak{g} \to Der(\mathfrak{g},
       \mathfrak{g} \otimes \mathfrak{g})$ are derivations and
 \item $T_1, T_2: \mathfrak{g} \otimes \mathfrak{g} \to \mathfrak{g} \otimes \mathfrak{g} $
       are linear maps.
\end{itemize}
These data subject to the following axioms:
\begin{itemize}
 \item {\bf Antisymmetry}. The linear maps  $\delta, \zeta$
are \emph{antisymmetric}, that is, for all $x \in \mathfrak{g}$
\begin{center} $\tau  \circ  \delta_a = -
\delta_a$ and $\tau  \circ  \zeta^b = - \zeta^b$.
\end{center}
\item {\bf Generalized CoJacobi}. For all $a,b, x \in \mathfrak{g}$,
\begin{equation}\label{CoJacobi}
(Id + \sigma + \sigma^{2})(\zeta^b \otimes Id)(T_1(a \otimes x))
=  (Id + \sigma + \sigma^{2})(\delta_a \otimes Id)(\tau(T_2(x \otimes b))).
\end{equation}
\end{itemize}
Where $\zeta^a$ and $\delta_b$ stand for $\zeta(a)$
and $\delta(b)$ respectively,
$\sigma = (Id \otimes \tau)(\tau \otimes Id)$
is the cyclic permutation and
$\tau :\lg \otimes \lg \to \lg \otimes \lg$ is the flip  map.

The pair $(\delta, \zeta)$ will be called the \emph{co-bracket} of
$\mathfrak{g}$ and the maps $T_1, T_2$
\emph{intertwining operators}.
\end{definition}

\subsection{First examples of derivator Lie bialgebras}

We remind here the definition of Lie bialgebra.
\begin{definition}
A \emph{Lie bialgebra} over $k$, is a collection
$(\lg, [ \cdot, \cdot ], \delta)$ where
$(\lg, [ \cdot, \cdot ])$ is a Lie $k$-algebra,
$\delta: \lg \to \lg \otimes \lg$ is a derivation
and it satisfies the following conditions,
\begin{itemize}
 \item ({\bf Antisymmetry}) $\tau \circ \delta = - \delta$ and
\item ({\bf CoJacobi})$(Id + \sigma + \sigma^2)(\delta \otimes Id)\delta = 0$.
\end{itemize}
\end{definition}
\begin{example}
If $\lg$ is a one-dimensional Lie algebra
all the Lie bialgebra structures are trivial,
that is, all are equal to zero.
\end{example}

\begin{example}
Let $\lg$ be a simple Lie algebra and $\Delta$
a root system for $\lg$. We define
$r = \sum_{\alpha \in \Delta^{+}} e_\alpha \wedge e_{-\alpha}$.
The internal derivation determined by $r$
is a Lie bialgebra structure over $\mathfrak{g}$.
\end{example}

\begin{example}
Let $(\lg, [\cdot, \cdot], \beta)$ be a Lie bialgebra
and consider $\alpha: \lg \to k$ a linear functional.
We define
\begin{center}
$\delta, \zeta: \mathfrak{g} \to
Der(\mathfrak{g}, \mathfrak{g} \otimes \mathfrak{g})$ by $\delta(x)
= \zeta(x) = \alpha(x)\beta$
\end{center}
and
\begin{center}
$T_1,T_2 :\lg \otimes \lg \to \lg \otimes \lg$ by
$T_1(a \otimes x) = \alpha(a)\beta(x)$ and
$T_2(x \otimes b) = \alpha(b)\beta(x)$.
\end{center}
The collection $(\lg, [\cdot, \cdot], \delta, \zeta, T_1, T_2)$
is a derivator Lie bialgebra.
\end{example}

\begin{example}
Let $\mathfrak{g}$ be a Lie algebra of dimension $2$
and let $\{ X, Y \}$ be a base of $\lg$ such that $[X, Y] = X$.
We consider the derivation $\beta: \lg \to \lg \otimes \lg$
defined by $\beta(X) = X \wedge Y $ and $\beta(Y) = 0$.
Define a pair of operators
$\iota_X: \lg \to \lg \wedge \lg$ and
$\iota_Y: \lg \to \lg \wedge \lg$
by $\iota_X(Z) = Z \wedge X$ and $\iota_Y(Z) = Z \wedge Y$,
respectively.
We define $\delta = \zeta$ as $\delta(X) = \iota_Y$
and $\delta(Y) = X$, then we extend by linearity.
Clearly $\delta$ is a $Der(\lg, \lg \otimes \lg)$ valued
derivation of $\lg$. If we take
$T_1 =T_2 = Id: \lg \otimes \lg \to \lg \otimes \lg$,
then $(\lg, [\cdot, \cdot], \delta, \delta, Id, Id)$
is a derivator Lie bialgebra;
the verification is left to the reader.
The only that we need to check is that
each sides of equality \eqref{CoJacobi} are equals to zero.
\end{example}

\section{From infinitesimal multiplier bialgebras
to derivator Lie bialgebras}

In this section we describe a way to obtain
derivator Lie bialgebras in a similar way in which
in \cite{A1} were obtained Lie bialgebras from
infinitesimal bialgebras.

\begin{definition}\label{bibalanceato}
Let $(A, \mu, \Delta)$ be an infinitesimal multiplier
bialgebra. The \emph{bibalanceator} of $A$ is the linear map
$\mathbb{B}: A \to End(A \otimes A)^2$, where
$\mathbb{B}(a) = (\underline{\mathbb{B}}(a) , \overline{\mathbb{B}}(a))$
with
\begin{equation}\label{underbalanceator}
 \underline{\mathbb{B}}(a)(x \otimes y) = (x  \otimes 1)
\tau\Delta(y)(1\otimes a) - \tau \Delta(y)(1 \otimes ax) + (1 \otimes
y)\Delta(x)(a \otimes 1) - \Delta(x)(ay \otimes 1),
\end{equation}
and
\begin{equation}\label{overbalanceator}
 \overline{\mathbb{B}}(a)(x \otimes y) = (xa \otimes 1)
\tau \Delta(y) - (a \otimes 1)\tau\Delta(y)(1 \otimes x) + (1
\otimes ya) \Delta(y) - (1 \otimes a) \Delta(x)(y \otimes 1) .
\end{equation}
We say that $\mathbb{B}$ is \emph{symmetric} if
$\underline{\mathbb{B}}(a) = \underline{\mathbb{B}}(a)
\circ \tau$ and $\overline{\mathbb{B}}(a) = \overline{\mathbb{B}}(a)
\circ \tau$, for all $a \in A$.
We say that $A$ is \emph{bibalanced} if its bibalanceator is equal
to zero.
\end{definition}

\begin{proposition}\label{prop-bibalanceador-symetrico}
Let $(A,\mu, \Delta)$ be an infinitesimal multiplier
bialgebra. We define the following linear maps
\begin{equation}\label{inf-undercobracket}
\delta: A \to Hom_k(A, A \otimes A ), \quad x \mapsto
\delta_{a}(x):= \Delta(x)(a \otimes 1) -\tau(\Delta(x)(a \otimes
1)),
\end{equation}
\begin{equation}\label{inf-overcobracket}
 \zeta: A \to Hom_k(A, A \otimes A ), \quad
x \mapsto \zeta^{a}(x):= (1 \otimes a)\Delta(x) - \tau((1 \otimes
a)\Delta(x)).
\end{equation}
Then,
\begin{equation}\label{der-inf-undercobracket}
\delta_a[x,y] = x \cdot \delta_a(y) - y \cdot \delta_a(x) +
\underline{\mathbb{B}}(a)(y \otimes x) - \underline{\mathbb{B}}(a)(x
\otimes y),
\end{equation}
and
\begin{equation}\label{der-inf-overcobracket} \zeta^a[x,y] = x
\cdot \zeta^a(y) - y \cdot \zeta^a(x) + \overline{\mathbb{B}}(a)(y
\otimes x) - \overline{\mathbb{B}}(a)(x \otimes y).
\end{equation}
\end{proposition}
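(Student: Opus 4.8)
The plan is to verify \eqref{der-inf-undercobracket} by expanding both sides into elements of $A\otimes A$ and matching them term by term; \eqref{der-inf-overcobracket} will then follow by the mirror-image computation, with $\Delta(x)(a\otimes 1)$ replaced by $(1\otimes a)\Delta(x)$ and $\underline{\mathbb{B}}$ replaced by $\overline{\mathbb{B}}$. Writing the Lie bracket as the commutator, $\delta_a[x,y]=\delta_a(xy)-\delta_a(yx)$, so everything reduces to computing $\delta_a(xy)$. Abbreviating $D_a(x):=\Delta(x)(a\otimes 1)$, which is a genuine element of $A\otimes A$ by part (a) of Definition \ref{def-coproduct}, definition \eqref{inf-undercobracket} reads $\delta_a(x)=D_a(x)-\tau D_a(x)$. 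The single computational engine is the derivation axiom (3) of Definition \ref{e-multiplier bialgebra}: multiplying it on the right by $a\otimes 1$ and using that $(1\otimes y)$ and $(a\otimes 1)$ commute in $M(A\otimes A)$ gives
\begin{equation*}
D_a(xy)=D_a(x)(1\otimes y)+(x\otimes 1)D_a(y),
\end{equation*}
and symmetrically for $D_a(yx)$. It is cleanest to record all of these expansions in Van Daele's Sweedler notation, writing $\Delta(x)=\sum x_1\otimes x_2$, so that $D_a(x)=\sum x_1 a\otimes x_2$; then each of the terms below becomes an explicit sum of simple tensors and the bookkeeping is purely symbolic.

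Next I would transport the untwisted identity through the flip, using $\tau\big[(x\otimes 1)P\big]=(1\otimes x)\tau(P)$ and $\tau\big[P(1\otimes y)\big]=\tau(P)(y\otimes 1)$ for $P\in A\otimes A$, together with the multiplier identity $\tau D_a(x)=\tau\Delta(x)(1\otimes a)$, which reconciles the notation of \eqref{underbalanceator} with the $\tau D_a$ notation. Assembling $\delta_a(xy)-\delta_a(yx)$ from the four expanded pieces produces eight terms, four ``untwisted'' (built from $D_a$) and four ``twisted'' (built from $\tau D_a$). The four untwisted terms are precisely those that organize into the principal part $x\cdot D_a(y)-y\cdot D_a(x)$, where $\cdot$ is the adjoint-type $\mathfrak{g}$-action on $A\otimes A$ that underlies the derivation condition of Definition \ref{Derivator Lie Bialgebras}; the four twisted ones, after the same grouping, supply the $\tau D_a$ half of $x\cdot\delta_a(y)-y\cdot\delta_a(x)$ up to a residue that the bibalanceator must absorb.

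The decisive step is to identify that residue with $\underline{\mathbb{B}}(a)(y\otimes x)-\underline{\mathbb{B}}(a)(x\otimes y)$. The key observation that makes this tractable is that the terms of \eqref{underbalanceator} carrying the products $ax$ and $ay$ are not new building blocks: by the defining property of a left multiplier, $\Delta(x)(ay\otimes 1)=\big[\Delta(x)(a\otimes 1)\big](y\otimes 1)=D_a(x)(y\otimes 1)$, and likewise $\tau\Delta(y)(1\otimes ax)=\big[\tau\Delta(y)(1\otimes a)\big](1\otimes x)=\tau D_a(y)(1\otimes x)$. After these substitutions every summand of $\underline{\mathbb{B}}$ is itself a one-sided multiplication of some $D_a$ or $\tau D_a$ by $x\otimes 1$, $1\otimes x$, $y\otimes 1$ or $1\otimes y$, living in $A\otimes A$, so the claimed equality becomes a finite comparison of simple tensors in Sweedler form, which one checks by inspection. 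The identity \eqref{der-inf-overcobracket} is obtained by rerunning the argument verbatim with $\overline{D}^a(x):=(1\otimes a)\Delta(x)$ in place of $D_a(x)$ (here using $(1\otimes a)(x\otimes 1)=(x\otimes 1)(1\otimes a)$) and with $\overline{\mathbb{B}}$ from \eqref{overbalanceator} in place of $\underline{\mathbb{B}}$.

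I expect the main obstacle to be organizational rather than conceptual: it is the disciplined bookkeeping of the dozen-or-so terms in $M(A\otimes A)$, and above all keeping straight on which side each factor multiplies, since $(x\otimes 1)P$, $P(x\otimes 1)$, $(1\otimes x)P$ and $P(1\otimes x)$ are four genuinely different elements that the flip $\tau$ interchanges in a specific pattern. It is exactly the non-degeneracy of the product, assumed throughout, that embeds $A\otimes A$ into $M(A\otimes A)$ and legitimizes the left- and right-multiplier rewritings of the $ax,ay$ terms; without it the reduction in the previous paragraph would be meaningless. Once those rewritings are in place and everything is expressed in Sweedler notation, the verification is a mechanical symbol-match and carries over unchanged to the overbalanced identity.
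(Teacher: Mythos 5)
Your proposal follows essentially the same route as the paper's own proof: expand $\delta_a[x,y]$ through the derivation axiom for $\Delta$, expand $x\cdot\delta_a(y)-y\cdot\delta_a(x)$ term by term via the adjoint-type action $(ad_x\otimes 1+1\otimes ad_x)$, and identify the leftover terms with $\underline{\mathbb{B}}(a)(y\otimes x)-\underline{\mathbb{B}}(a)(x\otimes y)$, obtaining \eqref{der-inf-overcobracket} by the mirror computation, exactly as the paper does. The only difference is expository: you spell out the multiplier-algebra rewritings such as $\Delta(x)(ay\otimes 1)=\bigl[\Delta(x)(a\otimes 1)\bigr](y\otimes 1)$ and the flip rules for $\tau$, which the paper uses silently, so your argument is correct and matches the paper's.
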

\begin{proof}
If $a, x, y \in A$ then
\begin{align*}
 \delta_a[x,y] &= \Delta([x,y])(a \otimes 1)
- \tau \Delta([x,y](a \otimes 1)) \\
& = \Delta(xy - yx)(a \otimes 1) - \tau (\Delta(xy - yx)(a \otimes
1)),
\quad \text{and since $\Delta$ is a derivation,}\\
&= (x \otimes )\Delta(y)(a \otimes 1) + \Delta(x)(a \otimes y) - (y
\otimes 1)\Delta(x)(a \otimes 1)
- \Delta(y)(a\otimes x)\\
&-\tau((x \otimes )\Delta(y)(a \otimes 1)) -\tau(\Delta(x)(a \otimes
y)) +\tau((y \otimes 1)\Delta(x)(a \otimes 1)) +\tau(\Delta(y)(a
\otimes x)).
\end{align*}
On the other side,
\begin{align*}
 x \cdot \delta_a(y) &= (ad_x \otimes 1 + 1 \otimes ad_x) \delta_a (y) \\
&=(x \otimes 1)\delta_a(y) - \delta_a(y)(x \otimes 1) + (1 \otimes
x)\delta_a(y)
- \delta_a(y)(1 \otimes x) \\
&= (x \otimes 1)\Delta(y)(a \otimes 1) - \tau((1 \otimes x)
\Delta(y)(a \otimes 1))
-\Delta(y)(ax \otimes 1) + \tau(\Delta(y)(a \otimes x))\\
&+(1 \otimes x)\Delta(y)(a \otimes 1) - \tau((x \otimes
1)\Delta(y)(a \otimes 1)) -\Delta(y)(a \otimes x) +
\tau(\Delta(y)(ax \otimes 1)).
\end{align*}
We also have,
\begin{align*}
 y \cdot \delta_a(x) &=(y \otimes 1)\Delta(x)(a\otimes 1)
-\tau((1 \otimes y)\Delta(x)(a \otimes 1))
-\Delta(x)(ay \otimes 1) + \tau(\Delta(x)(a \otimes y)) \\
&+ (1 \otimes y )\Delta(x)(a \otimes 1) - \tau((y \otimes
1)\Delta(x)(a \otimes 1)) - \Delta(x)(a \otimes y) +
\tau(\Delta(x)(ay \otimes 1)),
\end{align*}
then, by definition of bi-balanceator, we obtain
(\ref{der-inf-undercobracket}). In the same way we
obtain equation (\ref{der-inf-overcobracket}).
\end{proof}
\begin{theorem}\label{principal final}
Let $(A,\mu, \Delta)$ be an infinitesimal multiplier bialgebra.
If the bi-balanaceator of $A$ is symmetric then the linear maps
\begin{equation}\label{inf-undercobracket-1}
\delta: A \to Der(A, A \otimes A ), \quad x \mapsto \delta_{a}(x):=
 (1 \otimes a)\Delta(x) - \tau((1 \otimes
a)\Delta(x)),
\end{equation}
\begin{equation}\label{inf-overcobracket-1}
 \zeta: A \to Der(A, A \otimes A ), \quad
x \mapsto \zeta^{a}(x):= \Delta(x)(a \otimes 1) -
\tau(\Delta(x)(a \otimes 1)),
\end{equation}
are well defined, where $\delta_a := \delta(a)$ and $\zeta^{a} :=
\zeta(a)$. Moreover, if we define
\begin{center}
$T_1, T_2: A^{Lie} \otimes A^{Lie} \to A^{Lie} \otimes A^{Lie} $
by $T_1(u \otimes v) = (1 \otimes u) \Delta(v)$ and
$T_2(u \otimes v) = \Delta(u)(v \otimes 1)$,
\end{center}
then the collection $(A,\; m - m \circ \tau,\; \delta,\;
\zeta, T_1, T_2)$ is a derivator Lie bialgebra.
\end{theorem}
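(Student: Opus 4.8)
The plan is to check, in order, the three ingredients of Definition \ref{Derivator Lie Bialgebras}: that $A^{Lie}:=(A,\,m-m\circ\tau)$ is a Lie algebra, that $\delta$ and $\zeta$ take values in $Der(A,A\otimes A)$ and are antisymmetric (this is the ``well defined'' claim), and that the generalized CoJacobi identity \eqref{CoJacobi} holds. The first point is the usual fact that an associative algebra becomes a Lie algebra under the commutator $[x,y]=xy-yx$. For the target of $\delta,\zeta$, note that by Definition \ref{def-coproduct}(a) the elements $(1\otimes a)\Delta(x)=T_4(x\otimes a)$ and $\Delta(x)(a\otimes 1)=T_3(a\otimes x)$ lie in $A\otimes A$, so $\delta_a(x)$ and $\zeta^a(x)$ do as well; and since each of $\delta_a,\zeta^a$ has the form $w\mapsto w-\tau(w)$, the antisymmetry axiom $\tau\circ\delta_a=-\delta_a$, $\tau\circ\zeta^b=-\zeta^b$ is immediate.

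For the derivation property (with $A\otimes A$ carrying the adjoint $A^{Lie}$-module structure) I would quote Proposition \ref{prop-bibalanceador-symetrico}. Its identities \eqref{der-inf-overcobracket} and \eqref{der-inf-undercobracket} say precisely that $\delta_a[x,y]$ and $\zeta^a[x,y]$ equal $x\cdot\delta_a(y)-y\cdot\delta_a(x)$ and $x\cdot\zeta^a(y)-y\cdot\zeta^a(x)$ up to the correction terms $\overline{\mathbb{B}}(a)(y\otimes x)-\overline{\mathbb{B}}(a)(x\otimes y)$ and $\underline{\mathbb{B}}(a)(y\otimes x)-\underline{\mathbb{B}}(a)(x\otimes y)$ respectively (here the roles of $\delta$ and $\zeta$ are interchanged relative to the Proposition, matching the present statement). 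The hypothesis that $\mathbb{B}$ is symmetric means exactly $\overline{\mathbb{B}}(a)=\overline{\mathbb{B}}(a)\circ\tau$ and $\underline{\mathbb{B}}(a)=\underline{\mathbb{B}}(a)\circ\tau$, so both correction terms vanish and $\delta_a,\zeta^a\in Der(A,A\otimes A)$. This is the one place where the symmetry hypothesis is consumed.

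The substantive step is \eqref{CoJacobi}. I would pass to Van Daele's Sweedler notation, writing $(1\otimes a)\Delta(x)=x_1\otimes ax_2$ and $\Delta(x)(b\otimes 1)=x_1b\otimes x_2$, so that $T_1(a\otimes x)=x_1\otimes ax_2$ and $T_2(x\otimes b)=x_1b\otimes x_2$. Applying $\zeta^b$ to the first leg of $T_1(a\otimes x)$, applying $\delta_a$ to the first leg of $\tau(T_2(x\otimes b))$, and using the coassociativity relation \eqref{coassociativity2} to rewrite the resulting iterated coproducts in terms of the single covered triple coproduct $w:=x_1b\otimes x_2\otimes ax_3$, I expect to obtain
\[
(\zeta^b\otimes Id)\bigl(T_1(a\otimes x)\bigr)=w-\tau_{12}(w),\qquad
(\delta_a\otimes Id)\bigl(\tau(T_2(x\otimes b))\bigr)=\sigma(w)-\tau_{13}(w),
\]
where $\tau_{12}$ and $\tau_{13}$ denote the flips of the first two, respectively the outer two, tensor legs. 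The identity \eqref{CoJacobi} then follows by pure $S_3$ combinatorics: since $\sigma^3=Id$ we have $(Id+\sigma+\sigma^2)\sigma=Id+\sigma+\sigma^2$, while for any transposition $\pi$ the triple $\{\pi,\sigma\pi,\sigma^2\pi\}$ exhausts all three transpositions of $S_3$, so that $(Id+\sigma+\sigma^2)\tau_{12}=(Id+\sigma+\sigma^2)\tau_{13}$. Applying $(Id+\sigma+\sigma^2)$ to the two displayed expressions therefore gives equal results, which is exactly \eqref{CoJacobi}. Note that this step rests only on coassociativity, not on the symmetry of $\mathbb{B}$.

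The main obstacle is the rigorous handling of the multiplier Sweedler notation: neither $x_1$ nor $x_2$ is an element of $A$ on its own, so one must justify that applying $\zeta^b$ or $\delta_a$ to a single leg is legitimate and that the partially covered triple coproduct $w$ is well defined. Concretely, $(\zeta^b\otimes Id)(T_1(a\otimes x))$ has to be read as $(\zeta^b\otimes Id)$ applied to the genuine element $(1\otimes a)\Delta(x)\in A\otimes A$, and one checks that covering $(\Delta\otimes Id)\bigl((1\otimes a)\Delta(x)\bigr)$ by $b$ in the first leg reproduces $w$; it is precisely \eqref{coassociativity2} that makes the two descriptions of $w$ agree. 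Discharging this bookkeeping, with non-degeneracy of the product used to move freely between $A$ and $M(A\otimes A)$, is the technical heart of the argument; once it is in place the combinatorial identity closes the proof.
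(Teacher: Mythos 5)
Your proposal is correct and follows essentially the same route as the paper: it invokes Proposition \ref{prop-bibalanceador-symetrico} together with the symmetry of the bibalanceator to get that $\delta_a,\zeta^b$ are derivations (noting, as the paper implicitly does, the interchange of the roles of $\delta$ and $\zeta$ between the Proposition and the Theorem), and then verifies the generalized CoJacobi identity by the same Sweedler-notation computation, reducing both sides via coassociativity \eqref{coassociativity2} to the covered triple coproduct $x_1b\otimes x_2\otimes ax_3$. In fact your writeup is slightly more complete than the paper's, which ends with a bare ``It follows that'': you make explicit the $S_3$ combinatorics ($(Id+\sigma+\sigma^2)\sigma=Id+\sigma+\sigma^2$ and the equality of $(Id+\sigma+\sigma^2)\tau_{12}$ with $(Id+\sigma+\sigma^2)\tau_{13}$ via the odd coset) and you flag the multiplier-bookkeeping needed to apply $\zeta^b\otimes Id$ leg-by-leg, both of which the paper suppresses.
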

\begin{proof}
Since the bi-balanceator of $A$ is symmetric, by proposition
\ref{prop-bibalanceador-symetrico}, the maps
$\delta$ and $\zeta$ are derivations that
take values in the space of derivations
of $A^{Lie}$ with values in the $A^{Lie}$-module $A
\otimes A$. Then, the only that we have to prove
ins that the generalized coJacobi condition is satisfied.
In fact if $a, b, x \in A$ the using Sweedler notation
\begin{align}
(\zeta^b \otimes Id)(T_1(a \otimes x))&=
(\Delta \otimes Id)((1 \otimes a)\Delta(x))(b \otimes 1 \otimes 1) \\
&-(\tau \otimes id)((\Delta \otimes Id)((1 \otimes a)\Delta(x))(b \otimes 1 \otimes 1) \nonumber\\
&= x_{(1)(1)}b \otimes x_{(1)(2)} \otimes a x_{(2)} -
x_{(1)(2)} \otimes x_{(1)(1)}b \otimes a x_{(2)}. \nonumber
\end{align}
Now,
\begin{align}
(\delta_a \otimes Id)(\tau (T_2 (x \otimes b)))&=
( 1 \otimes a \otimes 1)(\Delta \otimes Id)( \tau(\Delta(x)(b \otimes 1)))\\
&-(\tau \otimes id)((1 \otimes a \otimes 1)(\Delta \otimes Id)(\tau(\Delta(x)(b \otimes 1))))\nonumber\\
&= x_{(2)(1)} \otimes a x_{(2)(2)} \otimes x_{(1)} b -
 a x_{(2)(2)} \otimes x_{(2)(1)} \otimes x_{(1)} b; \nonumber
\end{align}
then, since $\Delta$ is coassociative \eqref{coassociativity},
then $x_{(1)(1)}b \otimes x_{(1)(2)} \otimes ax_{(2)} =
x_{(1)}b \otimes x_{(1)(2)} \otimes ax_{(2)}$.
It follows that
$$(Id + \sigma + \sigma^{2})(\zeta^b \otimes Id)(T_1(a \otimes x))
=  (Id + \sigma + \sigma^{2})(\delta_a \otimes Id)(\tau(T_2(x \otimes b)))$$.
\end{proof}
\begin{remark}
The definition of the coproduct in equations \eqref{der-inf-undercobracket}
and  \eqref{der-inf-overcobracket} can be something
annoying since the presence of the factors $(a \otimes 1)$
and $(1 \otimes a)$, but this is because the
coassociativity constraints \ref{def-coproduct}.
\end{remark}

\begin{definition}\label{generalized e-multiplier bialgebra}
A \emph{generalized infinitesimal multiplier bialgebra}
(or generalized multiplier $\epsilon$-bialgebra)
is a quadruple $(A, \ast, \cdot,\Delta)$ where
\begin{itemize}
\item $(A,\ast)$ and $(A, \cdot)$ are associative algebras;
\item $(A, \cdot)$ has non-degenerate product;
\item $\mathbb{M}(A \otimes A, \cdot)$ is a $(A, \ast)$-bimodule;
\item $\Delta: A \to M(A\otimes A, \cdot) $ is a coproduct on $A$, and
\item $\Delta(a \ast b) = a \vartriangleright \Delta(b) + \Delta(a) \vartriangleleft b.$
Where $\vartriangleright$ and $\vartriangleleft$ stand for
the left and right actions of $(A, \ast)$ on the multipliers
of $(A \otimes A,\cdot \otimes \cdot)$.
\end{itemize}
\end{definition}
\begin{remark}
In definition \ref{e-multiplier bialgebra} when $\ast$
coincide with $\cdot$ we only write $(A,\cdot,\Delta)$
for the resultant structure.
\end{remark}
\begin{remark}
 It is clear that if $(A, \Delta)$ is an $\epsilon$-bialgebra
in the sense of Aguiar \cite{A2}, with nondegerenated product then,
it is a multiplier infinitesimal bialgebra
in the sense of definition \eqref{e-multiplier bialgebra}.
In fact, the only that we need to specify
is the actions of $A$ on the space of multipliers $\mathbb{M}(A \otimes A)$
and they are given by
$$a \vartriangleright \mu = (a \otimes 1) \mu
\quad \text{and} \quad
\mu \vartriangleleft a = \mu(1 \otimes a),$$
for every $a \in A$ and $\mu \in \mathbb{M}(A \otimes A)$.
\end{remark}

\begin{example} { \bf Infinite Cyclic Group }

Let us consider $X = \{ a \}$ and let $F = < X >$
be the infinite cyclic group generated $X$ (i. e. $F$ is the
infinite cyclic group). Let $k$ be a field and let
$A = k F$  be the group algebra of $F$ over $k$.
Define $\Delta: k F \to kF \otimes kF$ by the
following formulas,
\begin{align*}\label{freegroupepsilonbialgebra}
&\Delta(e) = 0, \quad \Delta(a) = e \otimes e,\quad
\Delta(a^{-1}) = - a^{-1} \otimes a^{-1}, \\
&\Delta(a^n) = (a \otimes 1) \Delta(a^{n-1}) + e \otimes a^{n-1}, \; \text{for any }\; n >1,  \\
&\Delta(a^{-n}) = - (a^{-n} \otimes 1)\Delta(a^n) (1 \otimes a^{-n}), \; \text{for any }\; n >1.
\end{align*}

\begin{lemma}\label{lemma infinite cyclic group}
The collection $(kF, m, \Delta)$ is a $\epsilon$-bialgebra.
\end{lemma}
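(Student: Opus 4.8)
The plan is to identify $kF$ with the Laurent polynomial algebra $k[x,x^{-1}]$ via $e\mapsto 1$, $a\mapsto x$, and to recognize $\Delta$ as the Newton \emph{divided-difference} operator. Since $kF$ is unital, here $\Delta$ genuinely lands in $kF\otimes kF$, so the claim is that $(kF,m,\Delta)$ is an $\epsilon$-bialgebra in the classical sense of the introduction: $\Delta$ is coassociative and $\Delta(uv)=(u\otimes 1)\Delta(v)+\Delta(u)(1\otimes v)$. Identifying $kF\otimes kF$ with $k[x^{\pm 1},y^{\pm 1}]$ through $a^i\otimes a^j\leftrightarrow x^iy^j$, set $\nabla(p)=\frac{p(x)-p(y)}{x-y}$. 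This is well defined on Laurent polynomials because $x-y$ divides $x^n-y^n$ for every $n\in\mathbb Z$ (for $n<0$, write $x^n-y^n=-x^ny^n(x^{-n}-y^{-n})$ and factor out the unit $x^ny^n$). The first step is to check, by induction, that $\Delta(a^n)$ corresponds to $\nabla(x^n)$ for all $n\in\mathbb Z$. For $n\ge 1$ the recursion $\Delta(a^n)=(a\otimes 1)\Delta(a^{n-1})+e\otimes a^{n-1}$ yields $\Delta(a^n)=\sum_{i=0}^{n-1}a^i\otimes a^{n-1-i}$, matching $\frac{x^n-y^n}{x-y}=\sum_{i=0}^{n-1}x^iy^{n-1-i}$; the case $n=0$ reads $\Delta(e)=0=\nabla(1)$; and for negative exponents the recursion $\Delta(a^{-n})=-(a^{-n}\otimes 1)\Delta(a^n)(1\otimes a^{-n})$ (valid for all $n\ge 1$ and consistent with $\Delta(a^{-1})=-a^{-1}\otimes a^{-1}$) together with the identity above gives $\Delta(a^{-n})\leftrightarrow\nabla(x^{-n})$. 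Once $\Delta=\nabla$ is established on the basis, both axioms become uniform, variable-free identities for $\nabla$.

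For the derivation axiom it suffices, by bilinearity, to take $u=a^m$ and $v=a^n$, where it becomes the Leibniz rule for the Newton quotient,
$$\frac{(pq)(x)-(pq)(y)}{x-y}=p(x)\,\frac{q(x)-q(y)}{x-y}+\frac{p(x)-p(y)}{x-y}\,q(y),$$
obtained by adding and subtracting $p(x)q(y)$ in the numerator. Under the identification, multiplication by $p(x)$ is left multiplication by $u\otimes 1$ and multiplication by $q(y)$ is right multiplication by $1\otimes v$, so this is exactly $\Delta(uv)=(u\otimes 1)\Delta(v)+\Delta(u)(1\otimes v)$, and it holds with no dependence on the signs of $m,n$.

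For coassociativity I identify $kF^{\otimes 3}$ with $k[x^{\pm 1},y^{\pm 1},z^{\pm 1}]$ and compute both composites. Since $(\Delta\otimes\iota)$ splits the first tensor slot and $(\iota\otimes\Delta)$ the second, one finds that $(\Delta\otimes\iota)\Delta(p)$ corresponds to $\frac{1}{x-y}\bigl[\frac{p(x)-p(z)}{x-z}-\frac{p(y)-p(z)}{y-z}\bigr]$ and $(\iota\otimes\Delta)\Delta(p)$ to $\frac{1}{y-z}\bigl[\frac{p(x)-p(y)}{x-y}-\frac{p(x)-p(z)}{x-z}\bigr]$. A short partial-fractions manipulation shows that each equals the symmetric second divided difference
$$p[x,y,z]=\frac{p(x)}{(x-y)(x-z)}+\frac{p(y)}{(y-x)(y-z)}+\frac{p(z)}{(z-x)(z-y)},$$
and coassociativity follows from the manifest symmetry of $p[x,y,z]$. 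Alternatively, one can invoke that the coassociator $(\Delta\otimes\iota)\Delta-(\iota\otimes\Delta)\Delta$ is itself a derivation $kF\to kF^{\otimes 3}$ once $\Delta$ is, which reduces the check to the two algebra generators $a$ and $a^{-1}$.

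I expect the only real difficulty to be bookkeeping. Attacking the three axioms directly from the two-sided recursion forces separate arguments for positive, negative and mixed exponents, and the negative-power recursion $\Delta(a^{-n})=-(a^{-n}\otimes 1)\Delta(a^n)(1\otimes a^{-n})$ is unpleasant to iterate. The divided-difference identification is what dissolves this: after proving $\Delta=\nabla$ on the basis, the Leibniz rule and the symmetry of $p[x,y,z]$ are single uniform identities in $k[x^{\pm 1},y^{\pm 1},z^{\pm 1}]$, with no sign cases to separate.
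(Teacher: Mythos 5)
Your proof is correct, but it takes a genuinely different route from the paper's. The paper works entirely with the recursive definition: it first proves the closed formulas (essentially \eqref{freegroupepsilonbialgebra1} and its negative-exponent analogue) by induction, then verifies the derivation axiom $\Delta(a^m a^{-n})=(a^m\otimes e)\Delta(a^{-n})+\Delta(a^m)(e\otimes a^{-n})$ by a further induction split into cases according to the signs and relative sizes of the exponents, and it dispatches coassociativity with only the remark that it ``is clear.'' You instead identify $kF$ with $k[x^{\pm1}]$ and recognize $\Delta$ as the Newton divided difference $\nabla(p)=\frac{p(x)-p(y)}{x-y}$ on $k[x^{\pm1},y^{\pm1}]\cong kF\otimes kF$; your inductive first step (matching $\Delta(a^{\pm n})$ with $\nabla(x^{\pm n})$, including the consistency of the recursion with $\Delta(a^{-1})=-a^{-1}\otimes a^{-1}$) is the analogue of the paper's closed formulas, but after that both axioms collapse to single sign-free identities: the Leibniz rule for the Newton quotient, obtained by adding and subtracting $p(x)q(y)$, and the symmetry of the second divided difference $p[x,y,z]$, with equality of Laurent polynomials legitimately checked in the fraction field since $k[x^{\pm1},y^{\pm1},z^{\pm1}]$ is a domain. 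What your approach buys is twofold: it eliminates exactly the case analysis over positive, negative and mixed exponents that makes the paper's verification laborious (and somewhat sketchy --- the paper's two displayed cases appear to cover only the mixed-sign products), and it supplies an actual proof of coassociativity, which the paper omits; it also exhibits the example as the Laurent-polynomial extension of the Joni--Rota divided-difference $\epsilon$-bialgebra \cite{JR, A2}, the very motivating example the introduction cites. Your alternative coassociativity argument is also sound: the coassociator $(\Delta\otimes\iota)\Delta-(\iota\otimes\Delta)\Delta$ is a derivation into $A^{\otimes3}$ with the outer bimodule actions once the Leibniz rule is known (the cross terms $(\Delta(u)\otimes1)(1\otimes\Delta(v))$ cancel), so it suffices to check it vanishes on the algebra generators $a$ and $a^{-1}$, which is immediate; since you prove the Leibniz rule first, the logical order is unobjectionable.
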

\begin{proof}
First, it is clear that $(k F, \Delta)$ is an
associative coalgebra. In fact, an induction argument
shows that the formulas given for $\Delta$ reduces to,
\begin{equation}\label{freegroupepsilonbialgebra1}
\Delta(a^{n+1}) = e \otimes a^n + a \otimes a^{n-1}+
\cdots a^{n-1} \otimes a + a^n \otimes e, \quad \text{for all}\; n >0,
\end{equation}
\begin{equation}
\Delta(a^{-n}) = - (a^{-n} \otimes a^{-1} + a^{-(n-1)} \otimes a^{-2} +
\cdots + a^{-2} \otimes a^{-(n-1)} + a^{-1} \otimes a^{-n}),
\quad \text{for all}\; n>0.
\end{equation}
To prove the compatibility condition, between
$m$ and $\Delta$ we use induction again.
\begin{itemize}
\item Given $m > n >0$ we have that,
$$
\Delta (a^m a^{-n}) = \Delta(a^{m-n}) =
(a \otimes e)\Delta(a^{m-n-1}) + e \otimes a^{m - n - 1};
$$
now, by induction hypothesis
$\Delta(a^{m-n-1}) = (a^{m-1} \otimes e)\Delta(a^{-n})
+ \Delta(a^m)(e \otimes a^{-n})$, then
$$\Delta(a^{m-n}) = (a^m \otimes e)\Delta(a^{-n})
+ (a \otimes e)\Delta(a^{m-1})(e \otimes a^{-n})
+ e \otimes a^{m-n-1}.$$
On the other hand,
\begin{align*}
&(a^m \otimes e)\Delta(a^{-n}) + \Delta(a^m)( e \otimes a^{-n}) \\
&= (a^m \otimes e)( -(a^{-n} \otimes e)\Delta(a^n))
 + \Delta(a^m)(e \otimes a^{-n}) \\
 &= -(a^{m-n} \otimes e)\Delta(a^n)(e \otimes a^{-n})
 + (a \otimes e)\Delta(a^{m-1})(e \otimes a^{-n})
 - (a^{m-1} \otimes e)\Delta(a^n)(e \otimes a^{-n}),
\end{align*}
and the conclusion follows.

\item For $0 < n < m $ a similar argument shows that
$\Delta(a^m a^{-n}) = (a^m \otimes e)\Delta(a^{-n})
 + \Delta(a^m)(e \otimes a^{-n}).$
\end{itemize}

In conclusion, $(kF, m, \Delta)$ is an $\epsilon$-bialgebra.
\end{proof}

Let us denote by $K(F)$ the space of finitely supported complex
valued functions defined on $F$. For any $n \in \mathbb{Z}$
we will to write $\delta_n$ for the function defined
over $F$ that takes the value $1$ at $a^n$ and is
equals to zero en other case.
It is clear that every element $f \in K(F)$
can be writen as the \emph{finite} linear combination
$\sum_{n \in \mathbb{Z}} f(a^n)\delta_n$.
Over $K(F)$ we can define two structures of
associative algebra, the first one is the
classical structure defined by  the formula
$\delta_m \cdot \delta_n =  \delta_{n,m} \delta_m$,
and the second one is defined in the following
\begin{lemma}\label{nonstandar product over K(F)}
Let $\ast: K(F) \otimes K(F) \to K(F)$ be the
bilinear map defined as follows
\begin{itemize}
 \item $\delta_m \ast \delta_n = \delta_{m+n+1}$, for
every $m,n \in \mathbb{Z}_{\geq 0}$,
\item $\delta_m \ast \delta_n = -\delta_{m+n+1}$, for every
$m,n \in \mathbb{Z}_{<0}$, and
\item $\delta_m \ast \delta_n = 0$ otherwise.
\end{itemize}
With these map $K(F)$ becomes an associative nonunital
algebra.
\end{lemma}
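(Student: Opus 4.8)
The plan is to reduce everything to the basis $\{\delta_n\}_{n \in \mathbb{Z}}$ and then exploit a splitting of $K(F)$ into two mutually annihilating subalgebras. Since $\ast$ is bilinear, it suffices to verify $(\delta_l \ast \delta_m)\ast \delta_n = \delta_l \ast (\delta_m \ast \delta_n)$ for all integers $l,m,n$. The key structural observation I would make is that, writing $K_+ = \mathrm{span}\{\delta_n : n \geq 0\}$ and $K_- = \mathrm{span}\{\delta_n : n < 0\}$, the defining rules give $K_+ \ast K_+ \subseteq K_+$ (since $m,n \geq 0$ forces $m+n+1 \geq 1$), $K_- \ast K_- \subseteq K_-$ (since $m,n \leq -1$ forces $m+n+1 \leq -1$), and $K_+ \ast K_- = K_- \ast K_+ = 0$ (the ``otherwise'' clause). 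Hence $K(F) = K_+ \oplus K_-$ as a direct sum of subspaces with vanishing cross products.

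Given such an orthogonal splitting, a triple product of arbitrary elements collapses to its two diagonal contributions in $K_+$ and $K_-$, so associativity of $\ast$ on $K(F)$ becomes equivalent to associativity of $\ast$ restricted to each of $K_+$ and $K_-$ separately; I would record this elementary fact and reduce to it. Concretely, at the level of basis vectors, every bracketing of a mixed-sign triple $\delta_l \ast \delta_m \ast \delta_n$ (where the signs of $l,m,n$ are not all equal) vanishes on both sides, which one checks directly case by case. This step is where the piecewise definition is tamed: the zero cross products guarantee that no inconsistency can arise from mixing the two regions.

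On $K_+$ the product is $\delta_m \ast \delta_n = \delta_{m+n+1}$, and the reindexing $\delta_m \leftrightarrow e_{m+1}$ identifies $K_+$ with the semigroup algebra of $(\mathbb{Z}_{\geq 1}, +)$, so associativity follows from associativity of addition; equivalently, both bracketings of $\delta_l \ast \delta_m \ast \delta_n$ equal $\delta_{l+m+n+2}$. On $K_-$ the product carries a sign, $\delta_m \ast \delta_n = -\delta_{m+n+1}$, and here I would absorb it via the substitution $g_p := -\delta_{-p}$ for $p \geq 1$, which yields $g_p \ast g_q = g_{p+q-1}$ and again presents $K_-$ as a semigroup algebra; equivalently, one checks directly that both bracketings yield $+\delta_{l+m+n+2}$, the two minus signs cancelling.

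Finally I would record nonunitality. Since every product $\delta_m \ast \delta_n$ equals $\pm \delta_{m+n+1}$, and the case $m+n+1 = 0$ occurs only when $m$ and $n$ have opposite signs (where the product is $0$), the basis vector $\delta_0$ never appears in the image of $\ast$; thus $K(F)^2 \neq K(F)$, so $K(F)$ cannot be unital. The main obstacle is not any single hard step but the sign bookkeeping in the $K_-$ block together with confirming that every mixed-sign bracketing vanishes on both sides; the orthogonal decomposition $K(F) = K_+ \oplus K_-$ is precisely what makes this bookkeeping manageable.
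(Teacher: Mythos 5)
Your proof is correct: the splitting $K(F)=K_+\oplus K_-$ into mutually annihilating subalgebras, the semigroup-algebra identifications (with the sign absorbed via $g_p=-\delta_{-p}$), and the observation that $\delta_0$ never lies in the span of products together give a complete verification of associativity and nonunitality. The paper's own proof is the single word ``Straightforward,'' i.e.\ exactly the routine direct check you carried out, so your argument supplies the omitted verification rather than taking a different route.
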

\begin{proof}
 Straightforward.
\end{proof}
\begin{remark}
It is \emph{clear} that the space of multipliers of
$(K(F) \otimes K(F), \cdot \otimes \cdot)$ is isomorphic to $C(F \times F)$,
the space of all complex valued functions defined
over $F$. Then $\mathbb{M}(K(F) \otimes K(F), \cdot \otimes \cdot)$
is a $(K(F),\ast)$-module.
\end{remark}
\begin{proposition}
Let $F$ be the free group in one generator and
let us denote by $K(F)$ the space of finitely supported complex
valued functions defined on $F$. With the same
notation of the above paragraph we define
\begin{equation}
 \Delta:K(F) \to C(F \times F) \quad
 \text{by} \quad \Delta(f)(a^m, a^n) = f(a^{m+n}),
\end{equation}
for every $f \in K(F)$ and $m,n \in \mathbb{Z}$.
Then $(K(F), \ast, \cdot, \Delta)$ is a generalized multiplier
$\epsilon$-bialgebra.
\end{proposition}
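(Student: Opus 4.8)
The plan is to check the five clauses of Definition~\ref{generalized e-multiplier bialgebra} one at a time, organized around the observation that $(K(F),\ast,\cdot,\Delta)$ is precisely the restricted dual of the $\epsilon$-bialgebra $(kF,m,\Delta)$ of Lemma~\ref{lemma infinite cyclic group}: the comultiplication $\Delta(f)(a^m,a^n)=f(a^{m+n})$ is dual to the group-algebra product of $kF$, while the product $\ast$ of Lemma~\ref{nonstandar product over K(F)} is dual to the coproduct of $kF$. First I would dispose of the purely algebraic clauses. Associativity of $\ast$ is Lemma~\ref{nonstandar product over K(F)}; the pointwise product $\cdot$ is associative, and the orthogonal idempotents $\delta_n$ provide local units, so $\cdot$ is non-degenerate and $K(F)$, $K(F)\otimes K(F)$ embed in their multiplier algebras. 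As in the preceding remark, $\mathbb{M}(K(F)\otimes K(F),\cdot\otimes\cdot)\cong C(F\times F)$, where a multiplier $\mu$ acts by pointwise multiplication, and the $(K(F),\ast)$-bimodule structure is the tensor-slot $\ast$-multiplication; on basis multipliers, writing $\Delta(\delta_k)$ as the formal antidiagonal sum $\sum_{p+q=k}\delta_p\otimes\delta_q$, this reads
\[
\delta_i\vartriangleright\Delta(\delta_j)=\sum_{p+q=j}(\delta_i\ast\delta_p)\otimes\delta_q,\qquad
\Delta(\delta_i)\vartriangleleft\delta_j=\sum_{p+q=i}\delta_p\otimes(\delta_q\ast\delta_j).
\]

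Second I would verify that $\Delta$ is a coproduct in the sense of Definition~\ref{def-coproduct}. The multiplier $\Delta(\delta_k)$ acts by $(\Delta(\delta_k)\cdot(\delta_i\otimes\delta_j))(a^m,a^n)=[m+n=k]\,\delta_i(a^m)\delta_j(a^n)$, and a direct evaluation gives
\[
T_3(\delta_i\otimes\delta_j)=\Delta(\delta_j)(\delta_i\otimes1)=\delta_i\otimes\delta_{j-i},\qquad
T_4(\delta_i\otimes\delta_j)=(1\otimes\delta_j)\Delta(\delta_i)=\delta_{i-j}\otimes\delta_j,
\]
both finitely supported, so condition~(a) holds. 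For coassociativity I would evaluate the two composites in \eqref{coassociativity} on $\delta_i\otimes\delta_j\otimes\delta_k$ and find that each equals $\delta_i\otimes\delta_{j-i-k}\otimes\delta_k$; this is exactly associativity of addition in $F\cong\mathbb{Z}$, i.e.\ the defining feature of a comultiplication dual to a group product.

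Third, and this is the crux, I would establish the compatibility $\Delta(f\ast g)=f\vartriangleright\Delta(g)+\Delta(f)\vartriangleleft g$. By bilinearity it suffices to take $f=\delta_i$, $g=\delta_j$ and evaluate at $(a^m,a^n)$. By Lemma~\ref{nonstandar product over K(F)} the left-hand side is $\pm[m+n=i+j+1]$, with sign $+$ if $i,j\ge0$, sign $-$ if $i,j<0$, and value $0$ for mixed signs; every term on the right is likewise supported on the antidiagonal $m+n=i+j+1$. Substituting the definition of $\ast$ into the two actions above, $\delta_i\vartriangleright\Delta(\delta_j)$ survives exactly where $p=m-i-1$ has the same sign as $i$, and $\Delta(\delta_i)\vartriangleleft\delta_j$ exactly where $q=n-j-1$ has the same sign as $j$; on the antidiagonal these conditions partition the line into the complementary half-lines $\{m\ge i+1\}$ and $\{m\le i\}$. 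Running the four sign-cases then shows the surviving term carries the correct sign and reproduces the left-hand side, which is precisely the dual of the $\epsilon$-derivation identity for $\Delta_{kF}$ proved in Lemma~\ref{lemma infinite cyclic group}.

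The main obstacle is this last step: one must keep the sign conventions and the unit shift ``$+1$'' built into $\ast$ straight, and confirm that the supports of the two module-action terms tile the antidiagonal with neither overlap nor gap, so that their signed sum collapses to the single value coming from $f\ast g$. Everything else—associativity, non-degeneracy, and the coproduct axioms—reduces either to associativity of $\mathbb{Z}$ or to the already proved Lemmas, and can be read off from the duality between $K(F)$ and the $\epsilon$-bialgebra $kF$.
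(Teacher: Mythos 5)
Your proposal is correct, and it actually does more than the paper: the paper's entire ``proof'' of this proposition is the single remark that $(K(F),\ast,\cdot,\Delta)$ is essentially dual to the $\epsilon$-bialgebra $(kF,m,\Delta)$ of Lemma~\ref{lemma infinite cyclic group}, with every verification omitted. You carry out precisely the check the paper waves at: your identification of $\mathbb{M}(K(F)\otimes K(F),\cdot\otimes\cdot)$ with $C(F\times F)$, the computations $T_3(\delta_i\otimes\delta_j)=\delta_i\otimes\delta_{j-i}$ and $T_4(\delta_i\otimes\delta_j)=\delta_{i-j}\otimes\delta_j$ (both landing in $K(F)\otimes K(F)$, with both composites in the coassociativity axiom equal to $\delta_i\otimes\delta_{j-i-k}\otimes\delta_k$), and the slotwise $\ast$-actions on basis multipliers are all accurate, and the sum defining $\delta_i\vartriangleright\Delta(\delta_j)$ is well defined because each point of the antidiagonal $m+n=i+j+1$ receives exactly one contribution. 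You should also spell out, even if briefly, that these slotwise actions satisfy the bimodule axioms required by Definition~\ref{generalized e-multiplier bialgebra}, which is routine but is a clause of the definition.

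One genuine inaccuracy in your summary of the crux step: the claim that the supports of $\delta_i\vartriangleright\Delta(\delta_j)$ and $\Delta(\delta_i)\vartriangleleft\delta_j$ always ``tile the antidiagonal with neither overlap nor gap'' holds only in the equal-sign cases. For $i,j\ge 0$ the conditions $p=m-i-1\ge 0$ and $q=n-j-1\ge 0$ do give the complementary half-lines $\{m\ge i+1\}$ and $\{m\le i\}$, and similarly for $i,j<0$; but in the mixed cases (say $i\ge 0$, $j<0$) the two conditions $p\ge 0$ and $q<0$ select the \emph{same} half-line $\{m\ge i+1\}$, so the two terms fully overlap with opposite signs and the identity holds by cancellation, consistent with $\delta_i\ast\delta_j=0$ from Lemma~\ref{nonstandar product over K(F)}. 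Since you direct the reader to run all four sign-cases, the computation you outline still closes correctly, but the geometric description should say: disjoint tiling in the equal-sign cases, total cancellation in the mixed cases.
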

\begin{proof}
 The proof relies on the fact that $(K(F), \ast, \cdot, \Delta)$
is esentially dual to $(kF, m, \Delta)$ of lemma
\ref{lemma infinite cyclic group} and we don't include it here.
\end{proof}

\end{example}

\end{document}